\documentclass[11pt]{article}

\usepackage{amsmath,amssymb,amsthm,mathtools,color}
\usepackage[margin=1in]{geometry}
\usepackage{enumitem}

\newtheorem{theorem}{Theorem}[section]
\newtheorem{proposition}[theorem]{Proposition}
\newtheorem{corollary}[theorem]{Corollary}
\newtheorem{lemma}[theorem]{Lemma}

\newtheorem*{theorem*}{Theorem}

\newcommand{\Ric}{\operatorname{Ric}}
\newcommand{\Vol}{\operatorname{Vol}}

\newcommand{\dd}{\,\mathrm{d}}
\newcommand{\Sph}{\mathbb{S}}

\title{Weighted volume monotonicity and isoperimetric comparison under a lower Ricci curvature bound}
\author{Jiewon Park \and Keomkyo Seo}
\date{\today}

\begin{document}

\maketitle

\begin{abstract}
We establish a weighted extension of the Bishop-Gromov volume comparison theorem for complete Riemannian manifolds with Ricci curvature bounded below. Given a point $p$ and positive radial weights $f$ and $h$, we consider the weighted volume of a geodesic ball and a corresponding model weighted volume in the simply connected space form of constant sectional curvature $k$. We prove that, under the assumption that $h/f$ is nondecreasing, the ratio of these two weighted volumes is nonincreasing with respect to the radius. We also characterize the equality case, showing that equality forces the weight ratio to be constant and the corresponding geodesic ball to be isometric to the model ball. As applications, we derive weighted Bishop-Gromov-type comparisons, annular comparison inequalities, and weighted volume doubling estimates. We further apply the monotonicity formula to obtain weighted area-volume inequalities and sharp weighted isoperimetric-type comparisons for geodesic balls.
\\

\noindent {\it Mathematics Subject Classification (2020)}: 53C20, 53C21, 53C24. \\
\noindent {\it Keywords and phrases}: Bishop-Gromov volume comparison, weighted volume, monotonicity formula, Ricci curvature, geodesic balls, isoperimetric inequality.
\end{abstract}

\section{Introduction}

Let $(M^n,g)$ be an $n$-dimensional complete Riemannian manifold satisfying $\Ric_M\geq (n-1)kg$, and let $\mathbb M_k^n$ denote the simply connected $n$-dimensional space form of constant sectional curvature $k$. Then, for every $p\in M$, the celebrated Bishop-Gromov volume comparison theorem states that the function 
$$
\frac{\Vol_g\bigl(B_p(r)\bigr)}{\Vol_{\mathbb M_k^n}\bigl(B(r)\bigr)}
$$
is nonincreasing on $(0,R_k)$, where $R_k=\pi/\sqrt{k}$ when $k>0$ and $R_k=+\infty$ when $k\leq0$. In particular, $\Vol_g\bigl(B_p(r)\bigr)\leq\Vol_{\mathbb M_k^n}\bigl(B(r)\bigr)$ for every $r\in(0,R_k)$. Moreover, equality in the preceding volume inequality holds at some $r_0\in(0,R_k)$ if and only if $B_p(r_0)$ is isometric to the geodesic ball of radius $r_0$ in $\mathbb M_k^n$. This theorem and its rigidity consequences are basic tools in comparison geometry, volume convergence, compactness, and the structure theory of spaces with Ricci curvature bounded below (see  \cite{BishopCrittenden,CheegerColding,ColdingVolume,Gromov} for instance).

The Bishop-Gromov theorem may be viewed as a two-step argument, the first step being the Laplacian comparison and the second step an integration to give the desired monotonicity. In this paper, we show that the same strategy yields various weighted monotonicity formulas with general weights subject only to one relation, which then have applications such as weighted area and volume estimates, and in particular, a weighted isoperimetric-type inequality for geodesic balls. We allow two weights to be chosen independently, subject only to a single relation, whereas in existing results the weights are typically prescribed to the best of our knowledge, for example as powers of the Green function or in terms of a Bakry-\'Emery potential.

We assume that $\Ric_M\geq(n-1)kg$ where $k \in \mathbb{R}$. We fix $p\in M$, and write $\rho(x)=d(p,x)$ for the distance function and $\dd\mu(x)$ for the (unweighted) volume measure. For positive smooth functions $f,h\colon(0,R)\rightarrow(0,\infty)$, define
\begin{align*}
V_f(p,r) &:= \int_{B_p(r)}f(\rho(x))\dd\mu(x), \\
V_{k,h}(r) &:= \alpha_{n-1}\int_0^r h(t)s_k(t)^{n-1}\dd t,
\end{align*}
where $\alpha_{n-1}$ denotes the volume of the $(n-1)$-dimensional unit sphere in $\mathbb{R}^n$ and $s_k$ is the warping function of $\mathbb M_k^n$ (see Section 2 for the precise definition). Also let $A(p,r):=\mathcal H_g^{n-1}\bigl(S_p(r)\bigr)$. Our first result is the following weighted monotonicity formula.

\begin{theorem}[see Theorem \ref{thm:main}]
Let $(M^n,g)$ be an $n(\geq 2)$-dimensional complete Riemannian manifold satisfying $\Ric_M\geq(n-1)kg$ for \(k\in\mathbb{R}\). For \(p\in M\) and  \(0<R<R_k\), let $f,h\colon(0,R)\rightarrow(0,\infty)$ be positive smooth functions such that 
the functions $f(t)A(p,t), f(t)s_k(t)^{n-1},$ and $h(t)s_k(t)^{n-1}$ are integrable on \((0,R)\). Assume that $H(t):=\frac{h(t)}{f(t)}$ is nondecreasing on \((0,R)\). Then the function 
$$\Phi_{f,h}(p,r) := \frac{V_f(p,r)}{V_{k,h}(r)}$$
is nonincreasing in $r \in (0,R)$. Moreover, $\Phi_{f,h}(p,r_1)=\Phi_{f,h}(p,r_2)$ holds for some \(0<r_1<r_2<R\) if and only if \(H\) is constant on \((0,r_2)\) and $B_p(r_2)$ is isometric to the geodesic ball of radius $r_2$ in $\mathbb M_k^n$.
\end{theorem}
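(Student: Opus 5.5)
The plan is to reduce the statement to a one-variable comparison lemma, exactly as in the classical Bishop--Gromov proof. Write $V_f(p,r)=\int_0^r f(t)A(p,t)\dd t$ by the coarea formula, since $|\nabla\rho|=1$ a.e. Write also $V_{k,h}(r)=\int_0^r h(t)\,\alpha_{n-1}s_k(t)^{n-1}\dd t$. The classical input is the area monotonicity coming from Laplacian comparison $\Delta\rho\le (n-1)s_k'/s_k$, understood in the sense of polar coordinates with the cut locus removed. This gives that
\[
a(t):=\frac{A(p,t)}{\alpha_{n-1}s_k(t)^{n-1}}
\]
is nonincreasing on $(0,R_k)$. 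Equivalently, $\mathcal A(t,\theta)/s_k(t)^{n-1}$ is nonincreasing along each minimal geodesic, where $\mathcal A$ is the area density in normal coordinates, extended by zero beyond the cut point. The integrability hypotheses make all the integrals finite, even though $f$ and $h$ may blow up at $0$.

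Next, rewrite the integrand of $V_f$ as
\[
f(t)A(p,t)=\frac{a(t)}{H(t)}\,h(t)\alpha_{n-1}s_k(t)^{n-1}=:\psi(t)\,g(t),
\]
where $g=h\alpha_{n-1}s_k^{n-1}>0$ is the model density and $\psi=a/H$. Since $a\ge 0$ is nonincreasing and $H>0$ is nondecreasing, $\psi$ is nonincreasing. The claim then becomes the elementary lemma: if $\psi\ge 0$ is nonincreasing and $g>0$ is integrable near $0$, then $r\mapsto \int_0^r\psi g\big/\int_0^r g$ is nonincreasing. To prove it, take $r_1<r_2$ and compare averages. The ratio at $r_2$ is a convex combination of the ratio at $r_1$ and the $g$-average of $\psi$ over $[r_1,r_2]$. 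That average is at most $\psi(r_1)$, which in turn is at most the $g$-average of $\psi$ over $(0,r_1]$. Alternatively, one can differentiate a.e. and use $\int_0^r(\psi(t)-\psi(r))g(t)\dd t\ge 0$.

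The equality case is the main obstacle. Suppose $\Phi(r_1)=\Phi(r_2)$. Tracing the inequalities above forces $\psi$ to be constant a.e. on $(0,r_2)$, say equal to $c$. Here $a(t)\to 1$ as $t\to 0$ and $a\le 1$, since $\mathcal A/s_k^{n-1}\to 1$ near $p$. Hence $a/H=c$, with $a$ nonincreasing and $1/H$ nonincreasing, both positive. A product of two positive nonincreasing functions is constant only if each factor is constant. Indeed, if $a$ strictly dropped between $t<t'$, then $a(t')/H(t')<a(t)/H(t)$, because $1/H(t')\le 1/H(t)$.

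It follows that $H$ is constant on $(0,r_2)$ and that $a\equiv 1$ there. This means $A(p,t)=\alpha_{n-1}s_k(t)^{n-1}$, so the cut locus does not meet $B_p(r_2)$ and $\mathcal A(t,\theta)=s_k(t)^{n-1}$ for all $\theta$. Equality in the Riccati comparison then forces $\Delta\rho=(n-1)s_k'/s_k$. Through the Bochner formula, this makes the Hessian of $\rho$ equal to $(s_k'/s_k)(g-d\rho^2)$, so the metric in polar coordinates is $dt^2+s_k(t)^2g_{\Sph^{n-1}}$. This is the standard rigidity of Bishop--Gromov, which I would cite rather than redo. The converse direction is immediate: if $H$ is constant and the ball is a model ball, then $\psi$ is constant and $\Phi$ is constant. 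Care is needed with a.e. versus everywhere statements. Since $A$ is only lower semicontinuous across the cut locus, I would work with the pointwise density $\mathcal A(t,\theta)$ and Fubini to make the monotonicity of $a$ rigorous.
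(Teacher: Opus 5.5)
Your argument is correct, but it is organized differently from the paper's.

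The paper keeps the two monotone inputs separate. It differentiates $\Phi_{f,h}$ almost everywhere and uses the monotonicity of $q_p(t)=A(p,t)/s_k(t)^{n-1}$ only to get $V_f(p,r)\geq q_p(r)\int_0^r f(t)s_k(t)^{n-1}\dd t$. It then rewrites the remaining bracket as $\int_0^r f(r)f(t)s_k(t)^{n-1}\bigl[H(t)-H(r)\bigr]\dd t\leq 0$. The equality case is read off by forcing both inequalities in this chain to be equalities at almost every $r\in(r_1,r_2)$.

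You instead multiply the two monotone quantities into the single nonincreasing function $\psi=a/H$. This is exactly the ratio of the integrands of $V_f(p,\cdot)$ and $V_{k,h}$, so the statement reduces to the classical averaging lemma behind Bishop--Gromov. You prove that lemma by a convex-combination comparison, with no differentiation.

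This buys you two things:
\begin{itemize}
\item You do not need the local absolute continuity of $\Phi_{f,h}$, which the paper uses tacitly to pass from an a.e.\ derivative bound to monotonicity.
\item In the equality case, your observation that a constant product of two positive nonincreasing functions has constant factors separates $H$ and $a$ in one stroke. The paper instead extracts both conditions from a vanishing derivative, implicitly using $A(p,r)>0$ at those radii.
\end{itemize}

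The paper's version, in turn, shows exactly where each hypothesis enters the derivative. Both proofs finish by invoking the same rigidity statement in Lemma~\ref{lem:area-comparison}.
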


The classical Bishop-Gromov volume comparison theorem is recovered by taking $f(t)\equiv 1$ and $h(t)\equiv 1$. Thus the above result is a natural radially weighted extension. Moreover, it provides useful applications including weighted volume and area comparisons for balls and annuli, a weighted doubling estimate, and weighted isoperimetric-type inequalities. Among them, we give the following weighted isoperimetric-type result.

\begin{corollary}[see Corollary \ref{cor:ck-isoperimetric}]
Let $(M^n,g)$ be an $n(\geq 2)$-dimensional complete Riemannian manifold satisfying $\Ric_M\geq(n-1)kg$ for $k \in \mathbb{R}$. Fix \(p\in M\). Denote $v:=V_{c_k}(p,r)$ for $r \in (0, \frac{R_k}{2})$. Then we have
\[
A_{c_k}(p,r)
\leq
n\omega_n^{1/n}
v^{\frac{n-1}{n}}
\left[
1-
k\left(
\frac{v}{\omega_n}
\right)^{\frac{2}{n}}
\right]^{1/2},
\]
where $c_k=s_k'$ and $A_{c_k}(p,r)=c_k(r)A(p,r)$. Moreover, equality holds if and only if $B_p(r)$ is isometric to the geodesic ball of radius $r$ in $\mathbb{M}_k^n$.	
\end{corollary}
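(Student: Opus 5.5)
The plan is to apply the weighted monotonicity formula (Theorem~\ref{thm:main}) with the choice $f=h=c_k$, and then convert the resulting comparison into the stated isoperimetric form using the identity $c_k^2+k s_k^2=1$.

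\textbf{Step 1: the model quantities.} Since $s_k'=c_k$, we get
\[
V_{k,c_k}(r)=\alpha_{n-1}\int_0^r c_k s_k^{n-1}\,dt=\omega_n s_k(r)^n ,
\qquad \omega_n=\alpha_{n-1}/n .
\]
For $r<R_k/2$ (or any $r$ if $k\le 0$) we have $c_k(r)>0$, so the weight is positive and $H\equiv 1$ is trivially nondecreasing. Integrability near $0$ is clear. The model weighted area is $c_k\alpha_{n-1}s_k^{n-1}$. Writing $\sigma=(v/\omega_n)^{1/n}$, a direct substitution shows that the right-hand side of the corollary equals
\[
n\omega_n\,\sigma^{n-1}\sqrt{1-k\sigma^2},
\]
and this is exactly the model weighted area when $\sigma=s_k(r)$.

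\textbf{Step 2: two comparisons on $M$.} First, Theorem~\ref{thm:main} with $f=h=c_k$ says $\Phi_{c_k,c_k}(p,r)$ is nonincreasing. Its limit as $r\to0$ is $1$, because small balls are almost Euclidean and $c_k(0)=1$. Hence $v\le \omega_n s_k(r)^n$, i.e.\ $\sigma\le s_k(r)$.

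Second, I need a pointwise area--volume bound valid for every $r$, not just almost every $r$. For this I will use the classical fact, from the Laplacian comparison step underlying Theorem~\ref{thm:main}, that $A(p,t)/s_k(t)^{n-1}$ is nonincreasing in $t$. Together with $c_k>0$ on $(0,r)$ this gives
\[
v=\int_0^r c_k(t)A(p,t)\,dt\ \ge\ \frac{A(p,r)}{s_k(r)^{n-1}}\int_0^r c_k s_k^{n-1}\,dt=\frac{A(p,r)s_k(r)}{n},
\]
that is, $A_{c_k}(p,r)\le n\,v\,c_k(r)/s_k(r)$. I expect this step to be the main technical point. If one instead differentiates $\Phi$, the bound only holds a.e.\ and would need care at radii where $A$ jumps; the sphere-ratio monotonicity avoids that issue.

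\textbf{Step 3: conclude.} It remains to show $c_k(r)/s_k(r)\le \sqrt{1-k\sigma^2}/\sigma$. Set $\varphi(\sigma)=\sqrt{\sigma^{-2}-k}$, which is decreasing in $\sigma$. It is well defined because $k\sigma^2\le k s_k(r)^2<1$ when $k>0$ and $r<R_k/2$. Since $c_k^2+k s_k^2=1$ with $c_k>0$, we have $\varphi(s_k(r))=c_k(r)/s_k(r)$. Then $\sigma\le s_k(r)$ gives $\varphi(\sigma)\ge c_k(r)/s_k(r)$. Multiplying by $n v$ and using $v=\omega_n\sigma^n$ yields the inequality.

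\textbf{Equality case.} If equality holds, then equality holds in Step 3. Since $\varphi$ is strictly decreasing, this forces $\sigma=s_k(r)$, i.e.\ $\Phi_{c_k,c_k}(p,r)=1=\lim_{t\to0}\Phi_{c_k,c_k}(p,t)$. Monotonicity then makes $\Phi$ constant on $(0,r]$, so $\Phi(p,r_1)=\Phi(p,r)$ for some $r_1<r$. The rigidity part of Theorem~\ref{thm:main} gives that $B_p(r)$ is isometric to the model ball. Applying it at $r_2=r$ may require working on a slightly larger interval $(0,R)$ with $R>r$, which is available since $r<R_k/2$. The converse direction is the model computation from Step 1.
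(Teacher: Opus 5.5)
Your proof is correct and is essentially the paper's argument. The paper obtains the corollary from Theorem~\ref{thm:weighted-isoperimetric} with $f=c_k$, whose proof uses the same three steps: $V_{c_k}(p,r)\le V_{k,c_k}(r)$; the bound $A_{c_k}(p,r)\le \mathcal{Q}_{k,c_k}(r)\,v$ from monotonicity of $A(p,t)/s_k(t)^{n-1}$; and monotonicity of $\mathcal{Q}_{k,c_k}=nc_k/s_k$ together with the model radius being at most $r$. You have specialized this and reparametrized via $\sigma=s_k(\text{model radius})$. The equality case differs only cosmetically: you use the strict decrease of $\sqrt{\sigma^{-2}-k}$ and the rigidity in Theorem~\ref{thm:main}, while the paper uses equality in $V_f(p,r)\ge q_p(r)\int_0^r f s_k^{n-1}$; both work.
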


We now discuss related earlier work generalizing the classical Bishop-Gromov monotonicity to other settings together with their geometric and analytic implications. We first note that a weighted volume comparison theory for the Bakry-\'Emery Ricci tensor, generalizing the classical Bishop-Gromov comparison for the Ricci tensor, was developed by Qian \cite{Qian}, a diffusion-theoretic formulation of which was given by Bakry and Qian \cite{BakryQian}. Wei and Wylie \cite{WeiWylie} obtained comparison theorems under additional control of the potential. Comparisons for the weighted isoperimetric quotients and capacities, together with applications to parabolicity and hyperbolicity, were obtained by Hurtado, Palmer, and Rosales \cite{HurtadoPalmerRosales}.

Many new monotonicity formulas have been obtained over the past two decades using elliptic potentials rather than the distance function. Colding \cite{ColdingMonotonicity} established new monotonicity formulas involving the Green function that are closely related to the Bishop-Gromov volume comparison theorem. More precisely, let $M$ be a complete nonparabolic Riemannian manifold with nonnegative Ricci curvature and let $G$ be a positive Green function with a pole at a fixed point. By introducing an auxiliary function $b$ determined by $G$, Colding constructed a family of monotone quantities defined in terms of the level sets and sublevel sets of $b$ and involving powers of $|\nabla b|$. Further monotonicity formulas and their geometric applications were subsequently developed by Colding and Minicozzi in \cite{ColdingMinicozziSurvey,ColdingMinicozziHarmonic,ColdingMinicozziTangentCones}, and extended to Bakry-\'Emery Ricci curvature by Song, Wei, and Wu \cite{SongWeiWu}. Colding's monotonicity formulas also have applications to quantitative splitting \cite{BreinerPark}, where pinching of the monotone quantities $k+1$ independent points controls the distance to the nearest cone splitting into an $\mathbb{R}^k$-factor. In addition, we remark that a Hessian comparison for $b^2$ implies new monotonicity formulas as well \cite{Park}. See also \cite{Ng} for comparison results based on the modified Hessian for the squared distance function $\rho^2$ (not $b^2$).

The exterior capacitary monotonicities of Agostiniani, Fogagnolo, and Mazzieri \cite{AgostinianiFogagnoloMazzieri} are closely related to Colding's results, and yield a sharp Willmore inequality and a three-dimensional sharp isoperimetric inequality; a weighted version was developed by Wu and Wu \cite{WuWu}. In substatic geometry, Borghini and Fogagnolo \cite{BorghiniFogagnolo} proved monotonicity of normalized area and volume functionals built from a reparametrized distance, yielding a sharp weighted isoperimetric inequality.

Finally, we mention that monotonicity formulas and volume comparison have been extended beyond the classical smooth setting with pointwise curvature bounds. Chen and Wei \cite{ChenWei} proved improved Bishop-Gromov relative volume comparisons under integral Ricci curvature bounds. Gigli and Violo \cite{GigliViolo} generalized the family of Colding-Minicozzi monotonicity formulas for positive harmonic functions to $\operatorname{RCD}(0,N)$ spaces and established corresponding rigidity and almost rigidity results. Balogh and Krist\'aly \cite{BaloghKristaly} used optimal transport to prove the sharp isoperimetric inequality in $\operatorname{CD}(0,N)$ spaces. More recently, Carron, Mondello, and Tewodrose \cite{CarronMondelloTewodrose} proved a weak Bishop-Gromov monotonicity formula under a strong Kato bound on the negative part of the Ricci curvature. 

The remainder of the paper is organized as follows. In Section~2, we introduce the model functions associated with space forms of constant sectional curvature and recall the area form of the Bishop-Gromov comparison theorem. We then establish the main weighted monotonicity theorem and characterize its equality case. In Section~3, we develop several applications of the monotonicity formula, including results for particular choices of radial weights, weighted annular comparison inequalities, and volume doubling estimates. Finally, Section~4 is devoted to weighted area-volume inequalities and isoperimetric-type comparisons for geodesic balls.

\section{The weighted Bishop-Gromov-type theorem}

Let $(M^n,g)$ be a complete $n(\geq 2)$-dimensional Riemannian manifold with $\Ric_M\geq (n-1)kg$ for some $k\in\mathbb{R}$. Throughout the paper, we denote by $s_k$  and $c_k$ the functions defined by
\[
s_k(t)
=
\begin{cases}
\dfrac{1}{\sqrt{k}}\sin(\sqrt{k}\,t),
& k>0,\\[1.2ex]
t,
& k=0,\\[1.2ex]
\dfrac{1}{\sqrt{-k}}\sinh(\sqrt{-k}\,t),
& k<0
\end{cases}
\]
and
\[
c_k(t):=s_k'(t).
\]
Let
\[
R_k=
\begin{cases}
\dfrac{\pi}{\sqrt{k}},
& k>0,\\[1ex]
+\infty,
& k\leq0,
\end{cases}
\]
and
\[
\alpha_{n-1}
=
\left|\Sph^{n-1}\right|
=
n\omega_n,
\qquad
\omega_n
=
\left|\mathbb{B}^n\right|,
\]
where $\Sph^{n-1}\subset\mathbb{R}^n$ and $\mathbb{B}^n\subset\mathbb{R}^n$ denote the Euclidean unit sphere and unit ball, respectively. Fix a point $p\in M$ and let
\[
\rho(x):=d(p,x).
\]
We denote by $B_p(r):=\{x\in M:\rho(x)<r\}$ the geodesic ball centered at $p$ and by $S_p(r):=\partial B_p(r)$ the corresponding geodesic sphere. Define  
\begin{align*}
V(p,r)&:=\Vol_g\bigl(B_p(r)\bigr), \\
A(p,r)&:=\mathcal H_g^{n-1}\bigl(S_p(r)\bigr).
\end{align*}
Here $\mathcal H_g^{n-1}$ denotes the $(n-1)$-dimensional Hausdorff measure induced by $g$ on $S_p(r)$. The corresponding model area and volume functions are defined by
\begin{align*}
A_k(r) &:= \alpha_{n-1}s_k(r)^{n-1} \\
V_k(r) &:= \int_0^r A_k(t)\dd t = \alpha_{n-1}\int_0^r s_k(t)^{n-1}\dd t.
\end{align*}
The following well-known result is the Bishop-Gromov comparison theorem for geodesic spheres (see \cite{BishopCrittenden} for instance).

\begin{lemma} \label{lem:area-comparison}
Let \((M^n,g)\) be an $n(\geq 2)$-dimensional complete Riemannian manifold satisfying $\Ric_M\geq(n-1)kg$ for \(k\in\mathbb{R}\). Then, for $p\in M$, the function $\frac{A(p,r)}{A_k (r)}$ is nonincreasing on \((0,R_k)\), where $A(p,r)=\mathcal H_g^{n-1}(S_p(r))$. In particular, $A(p,r)\leq A_k(r)$ for every $r\in(0,R_k)$. Moreover, for any $r_0\in(0,R_k)$, equality holds if and only if $B_p(r_0)$ is isometric to the geodesic ball of radius $r_0$ in $\mathbb{M}_k^n$.
\end{lemma}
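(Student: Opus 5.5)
We prove the lemma by writing the area of $S_p(r)$ in geodesic polar coordinates about $p$, applying the Laplacian (mean curvature) comparison along each radial geodesic, and integrating over directions.

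\textbf{Setup.} Let $\exp_p$ be the exponential map, let $U_p\subset T_pM$ be the unit sphere, and for $\theta\in U_p$ let $c(\theta)\in(0,\infty]$ be the cut distance. Since $\mathrm{Cut}(p)$ has measure zero, for every $r$ we have
\[
A(p,r)=\int_{U_p}\chi_{\{r<c(\theta)\}}\,J(r,\theta)\dd\theta .
\]
Here $J(r,\theta)$ is the Jacobian of $\exp_p$ at $r\theta$, so that $\dd\mu=J(r,\theta)\dd r\dd\theta$ inside the cut locus. Two facts hold:
\begin{enumerate}[label=(\roman*)]
\item Away from the cut locus, $\partial_r\log J=\Delta\rho$ (the mean curvature of $S_p(r)$).
\item $J(r,\theta)\sim r^{n-1}$ as $r\to0$.
\end{enumerate}
Measure-theoretically, the cut locus contributes nothing to the Hausdorff measure of $S_p(r)$ for a.e. $r$. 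To obtain the statement for every $r$, I would use that $S_p(r)\cap\mathrm{Cut}(p)$ is $\mathcal H^{n-1}$-null, which holds since the cut locus has Hausdorff dimension at most $n-1$ and meets each sphere in a lower-dimensional set. Alternatively, one can simply define $A$ via the polar formula, as is standard.

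\textbf{Key step: Laplacian comparison.} Along the radial geodesic $\gamma_\theta$ with $r<c(\theta)$, the Bochner formula applied to $\rho$ gives the Riccati inequality
\[
m'+\frac{m^2}{n-1}\le -\Ric(\partial_r,\partial_r)\le-(n-1)k ,
\qquad m:=\Delta\rho,
\]
using $|\nabla^2\rho|^2\ge(\Delta\rho)^2/(n-1)$. Comparing with the model quantity $m_k=(n-1)c_k/s_k$, which satisfies the corresponding equation with equality and has the same asymptotics $m\sim(n-1)/r$ near $0$, a standard ODE comparison gives $m\le m_k$ for $r<c(\theta)$. Hence
\[
\partial_r\log\frac{J(r,\theta)}{s_k(r)^{n-1}}\le0 ,
\]
so $J(r,\theta)/s_k(r)^{n-1}$ is nonincreasing in $r$ on $(0,c(\theta))$ and tends to $1$ as $r\to0$.

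\textbf{Monotonicity.} The function $r\mapsto\chi_{\{r<c(\theta)\}}J(r,\theta)/s_k(r)^{n-1}$ is therefore nonincreasing on $(0,R_k)$: it decreases until $c(\theta)$ and then drops to $0$. Integrating over $U_p$ shows that $A(p,r)/A_k(r)$ is nonincreasing. Letting $r\to0$, the ratio tends to $1$, which gives $A(p,r)\le A_k(r)$. The constraint $r<R_k$ is needed only so that $s_k>0$; Myers' theorem in fact forces $c(\theta)\le R_k$.

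\textbf{Rigidity (main obstacle).} If $A(p,r_0)=A_k(r_0)$, then the ratio equals $1$ on $(0,r_0]$. Hence for a.e. $\theta$ we have $c(\theta)\ge r_0$ and $J=s_k^{n-1}$ on $(0,r_0)$, and by continuity this holds for all $\theta$. Then $m\equiv m_k$, so equality holds in both the Riccati and the Cauchy--Schwarz steps. This gives
\[
\nabla^2\rho=\frac{c_k}{s_k}\bigl(g-\dd\rho\otimes\dd\rho\bigr)
\quad\text{and}\quad
\Ric(\partial_r,\partial_r)=(n-1)k .
\]
Consequently the Jacobi fields along radial geodesics are $s_k(r)E(r)$ with $E$ parallel. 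In polar coordinates the metric is then $\dd r^2+s_k(r)^2 g_{\Sph^{n-1}}$ on $B_p(r_0)$, with no cut points inside. Thus $\exp_p\circ\,\iota$, where $\iota$ is a linear isometry from the model tangent space, gives an isometry of the model ball onto $B_p(r_0)$. The converse direction is immediate.

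The delicate points are the cut-locus bookkeeping, namely showing that equality forces $c(\theta)\ge r_0$ for every $\theta$, and deducing the full Hessian identity, and hence the warped-product form of the metric, from equality of Jacobians.
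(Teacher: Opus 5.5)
Your route (polar coordinates, the Riccati inequality for $\Delta\rho$ along radial geodesics, integration over $U_p$, and equality in the Riccati and trace inequalities for rigidity) is the standard proof. The paper does not reproduce it but takes it from Bishop--Crittenden, and your comparison and rigidity computations are fine.

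The step that fails is the cut-locus bookkeeping, which is exactly where the definition $A(p,r)=\mathcal H^{n-1}_g(S_p(r))$ matters. As you note, $\mathcal H^{n-1}(S_p(r)\cap\mathrm{Cut}(p))=0$ for a.e.\ $r$. It is not true for every $r$. On the round $\mathbb{RP}^n$ (so $k=1$) one has $c(\theta)\equiv\pi/2$ and $S_p(\pi/2)=\mathrm{Cut}(p)\cong\mathbb{RP}^{n-1}$. Hence $A(p,\pi/2)=\alpha_{n-1}/2$, while your polar integral gives $0$. Redefining $A$ by the polar formula proves a different statement.

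The repair is a two-sided bound. Every point of $S_p(r)$ is $\exp_p(r\theta)$ for some $\theta$ with $c(\theta)\geq r$. The map $\theta\mapsto\exp_p(r\theta)$ has Jacobian $J(r,\theta)$ and is injective on $\{c>r\}$. The area formula then gives
\[
\int_{\{c>r\}}J(r,\theta)\dd\theta\;\leq\;A(p,r)\;\leq\;\int_{\{c\geq r\}}J(r,\theta)\dd\theta .
\]
By continuity, $J(\cdot,\theta)/s_k^{n-1}$ is nonincreasing on $(0,c(\theta)]$, including the endpoint. So for $r_1<r_2$,
\[
\frac{A(p,r_2)}{s_k(r_2)^{n-1}}\;\leq\;\int_{\{c\geq r_2\}}\frac{J(r_1,\theta)}{s_k(r_1)^{n-1}}\dd\theta\;\leq\;\int_{\{c> r_1\}}\frac{J(r_1,\theta)}{s_k(r_1)^{n-1}}\dd\theta\;\leq\;\frac{A(p,r_1)}{s_k(r_1)^{n-1}} .
\]
This is monotonicity at every radius. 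Your argument that equality forces the isometry then goes through as written, since after that point it only uses the polar formula for $r<r_0$.

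Second, the converse is not ``immediate''. With the lemma worded as an isometry of the open ball, it actually fails at such exceptional radii. In the same example, $B_p(\pi/2)$ is isometric to the open hemisphere, which is the model ball of radius $\pi/2$. Yet $A(p,\pi/2)=\alpha_{n-1}/2<\alpha_{n-1}=A_1(\pi/2)$.

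What an isometry of $B_p(r_0)$ onto the model ball does give is $A(p,r)=A_k(r)$ for $r<r_0$, and at $r_0$ only $A(p,r_0)\leq A_k(r_0)$. Equality at $r_0$ holds exactly when $\theta\mapsto\exp_p(r_0\theta)$ has multiplicity one $\mathcal H^{n-1}$-a.e.\ on $S_p(r_0)$. This is a defect in the statement rather than in your method, but your proof should not assert the converse as stated.
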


\noindent More generally, let $w\colon[0,R)\longrightarrow[0,\infty)$ for $0<R<R_k$ be a smooth function such that the functions $w(t)A(p,t)$ and $w(t)s_k(t)^{n-1}$ are integrable. Define the weighted volume and area functions by
\begin{align*}
V_w(p,r) &:= \int_{B_p(r)}w(\rho(x))\dd\mu(x) \\
A_w(p,r) &:= \int_{S_p(r)}w(\rho(x))\dd A(x) = w(r)A(p,r).
\end{align*}
Define the corresponding model weighted volume and area functions by
\begin{align*}
V_{k,w}(r) &:= \alpha_{n-1}\int_0^r w(t)s_k(t)^{n-1}\dd t \\
A_{k,w}(r) &:= \alpha_{n-1}w(r)s_k(r)^{n-1}.
\end{align*} 
By the coarea formula,
\[
V_w(p,r)
=
\int_0^r A_w(p,t)\dd t
=
\int_0^r w(t)A(p,t)\dd t,
\]
and thus
\[
\partial_r V_w(p,r)=A_w(p,r)=w(r)A(p,r)
\]
for almost every $r\in(0,R)$. Similarly,
\[
V_{k,w}'(r)=A_{k,w}(r).
\]

\noindent We are now ready to state our main theorem which can be regarded as a weighted Bishop-Gromov-type theorem.

\begin{theorem} \label{thm:main}
Let $(M^n,g)$ be an $n(\geq 2)$-dimensional complete Riemannian manifold satisfying $\Ric_M\geq(n-1)kg$ for \(k\in\mathbb{R}\). For \(p\in M\) and  \(0<R<R_k\), let $f,h\colon(0,R)\rightarrow(0,\infty)$ be positive smooth functions such that 
the functions $f(t)A(p,t), f(t)s_k(t)^{n-1}, h(t)s_k(t)^{n-1}$ are integrable on \((0,R_k)\). Assume that $H(t):=\frac{h(t)}{f(t)}$ is nondecreasing on \((0,R_k)\). Then the function 
$$\Phi_{f,h}(p,r) := \frac{V_f(p,r)}{V_{k,h}(r)}$$
 is nonincreasing in $r \in (0,R_k)$. Moreover, $\Phi_{f,h}(p,r_1)=\Phi_{f,h}(p,r_2)$ holds for some \(0<r_1<r_2<R_k\) if and only if \(H\) is constant on \((0,r_2)\) and $B_p(r_2)$ is isometric to the geodesic ball of radius $r_2$ in $\mathbb M_k^n$.
\end{theorem}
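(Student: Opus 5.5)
We reduce the statement to the classical Gromov-type lemma on ratios of integrals, fed by Lemma~\ref{lem:area-comparison}. Write
\[
V_f(p,r)=\int_0^r f(t)A(p,t)\dd t,\qquad V_{k,h}(r)=\int_0^r h(t)A_k(t)\dd t ,
\]
where the first identity is the coarea formula recorded above. The ratio of the integrands is
\[
q(t):=\frac{f(t)A(p,t)}{h(t)A_k(t)}=\frac{1}{H(t)}\cdot\frac{A(p,t)}{A_k(t)} .
\]
This is a product of two nonnegative nonincreasing functions: $1/H$ is nonincreasing because $H$ is positive and nondecreasing, and $A/A_k$ is nonincreasing by Lemma~\ref{lem:area-comparison}. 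Hence $q$ is nonincreasing on $(0,R_k)$.

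The key step is the elementary fact that if $a,b\ge0$ are integrable, $b>0$, and $a/b$ is nonincreasing, then $\int_0^r a/\int_0^r b$ is nonincreasing. We check this directly rather than by differentiating. For $r_1<r_2$, put $I=\int_0^{r_1}$ and $J=\int_{r_1}^{r_2}$. Since $q(s)\ge q(r_1)\ge q(t)$ for $s<r_1<t$, we get
\[
\int_0^{r_1}a\int_{r_1}^{r_2}b-\int_0^{r_1}b\int_{r_1}^{r_2}a=\int_0^{r_1}\!\!\int_{r_1}^{r_2}b(s)b(t)\bigl(q(s)-q(t)\bigr)\dd t\dd s\ \ge0 .
\]
This is equivalent to $\Phi(r_1)\ge\Phi(r_2)$ after adding $\int_0^{r_1}a\int_0^{r_1}b$ to both sides. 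Working with integrals this way avoids worrying about differentiability of $A(p,\cdot)$, which is only known almost everywhere or as a monotone-type function. The integrability hypotheses make all quantities finite. I would note that $q$ is only defined a.e., and take a representative; in fact $A/A_k$ is genuinely monotone.

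For the equality case, suppose $\Phi(r_1)=\Phi(r_2)$. Then the double integral vanishes, so $q(s)=q(t)$ for a.e.\ $s\in(0,r_1)$ and a.e.\ $t\in(r_1,r_2)$. Combined with monotonicity, $q$ is a.e.\ constant, say $c$, on $(0,r_2)$. I expect this step to be the main obstacle: splitting the constancy of $q$ into constancy of each factor. Both factors are nonincreasing and positive, and their product is constant. Moreover, $A(p,t)/A_k(t)\to1$ as $t\to0$ (small balls are almost Euclidean) and is at most $1$, so $A/A_k=c H$ is nonincreasing while $H$ is nondecreasing. This forces both to be constant. The limit at $0$ then gives $A/A_k\equiv1$ on $(0,r_2)$, and hence $H\equiv 1/c$. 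Continuity of $H$ (from smoothness) upgrades a.e.\ to everywhere, and $A/A_k$ is monotone, so a.e.\ equality suffices to reach $r_2^-$. Equality $A(p,t)=A_k(t)$ for $t$ arbitrarily close to $r_2$, together with the rigidity part of Lemma~\ref{lem:area-comparison}, gives that $B_p(t)$ is isometric to the model ball for all $t<r_2$. An exhaustion argument then gives it for $B_p(r_2)$; alternatively, use $V(p,r_2)=V_k(r_2)$ and classical Bishop--Gromov rigidity.

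Conversely, if $H\equiv C$ on $(0,r_2)$ and $B_p(r_2)$ is isometric to the model ball, then $A(p,t)=A_k(t)$ on $(0,r_2)$. Then $V_f(p,r)=C^{-1}V_{k,h}(r)$ for all $r\le r_2$, so $\Phi\equiv C^{-1}$ there, and equality holds.
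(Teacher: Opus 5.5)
Your proof is correct, but it combines the two monotonicities differently from the paper.

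\textbf{The paper's route.} It keeps the two monotonicities separate and works with derivatives. It first uses only the monotonicity of $q_p=A(p,\cdot)/s_k^{n-1}$ to get the lower bound $V_f(p,r)\geq q_p(r)\int_0^r f s_k^{n-1}\dd t$. It inserts this into the a.e.\ formula for $\partial_r\Phi_{f,h}$. Only then does it use that $H$ is nondecreasing, on the bracket $\int_0^r f(r)f(t)s_k(t)^{n-1}\bigl(H(t)-H(r)\bigr)\dd t$. For rigidity it passes from $\Phi_{f,h}(p,r_1)=\Phi_{f,h}(p,r_2)$ to $\partial_r\Phi_{f,h}=0$ a.e.\ on $(r_1,r_2)$. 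It then reads off equality in each of the two inequalities of the chain separately.

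\textbf{Your route.} You merge both monotone quantities into the single integrand ratio $fA/(hA_k)=H^{-1}\cdot A/A_k$. You then apply Gromov's lemma in integrated form, through the double integral over $(0,r_1)\times(r_1,r_2)$.

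\textbf{What this buys.} First, $\Phi_{f,h}$ is never differentiated. You therefore need neither a.e.\ differentiability nor absolute continuity of $\Phi_{f,h}$, which the paper's derivative argument uses tacitly. Second, the equality case comes straight from the single equality $\Phi_{f,h}(p,r_1)=\Phi_{f,h}(p,r_2)$. The product is constant on $(0,r_2)$, the two factors have opposite monotonicity, and $A/A_k\to1$ at $0$. Together these force $H$ constant and $A=A_k$ at once. The same normalization at $0$ also gives the positivity $c>0$ that your splitting step uses implicitly.

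\textbf{Final step.} Your exhaustion (or $V(p,r_2)=V_k(r_2)$ plus classical rigidity) for passing from $B_p(t)$, $t<r_2$, to $B_p(r_2)$ is slightly more careful than the paper. The paper applies Lemma~\ref{lem:area-comparison} directly at that point.
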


\begin{proof}
By the coarea formula,
\[
V_f(p,r)
=
\int_0^r f(t)A(p,t)\dd t.
\]
Define
\[
q_p(t):=\frac{A(p,t)}{s_k(t)^{n-1}}.
\]
Since $q_p$ is nonincreasing by Lemma~\ref{lem:area-comparison}, we see that
\begin{align}
V_f(p,r)
&=
\int_0^r f(t)s_k(t)^{n-1}q_p(t)\dd t
\nonumber\\
&\geq
q_p(r)
\int_0^r f(t)s_k(t)^{n-1}\dd t.
\label{eq:weighted-lower-bound}
\end{align}
The coarea formula yields at points of differentiability,
\begin{align*}
\partial_r V_f(p,r)&=A_f(p,r)=f(r)A(p,r), \\
V_{k,h}'(r)&=A_{k,h}(r) =\alpha_{n-1}h(r)s_k(r)^{n-1},
\end{align*}
which shows that
\begin{align*}
\partial_r\Phi_{f,h}(p,r)
&=
\frac{
 A_f(p,r)V_{k,h}(r)
 -A_{k,h}(r)V_f(p,r)
}{
 V_{k,h}(r)^2
}.
\end{align*}
Using \eqref{eq:weighted-lower-bound}, we obtain
\begin{align}
\partial_r\Phi_{f,h}(p,r)
&\leq
\frac{\alpha_{n-1}A(p,r)}{V_{k,h}(r)^2}
\left[
 f(r)\int_0^r h(t)s_k(t)^{n-1}\dd t
 -
 h(r)\int_0^r f(t)s_k(t)^{n-1}\dd t
\right]
\nonumber\\
&=
\frac{\alpha_{n-1}A(p,r)}{V_{k,h}(r)^2}
\int_0^r
 f(r)f(t)s_k(t)^{n-1}
 \bigl[H(t)-H(r)\bigr]\dd t
\nonumber\\
&\leq0,
\label{eq:original-proof-chain}
\end{align}
since $H$ is nondecreasing. This proves the desired monotonicity.

Now consider the case of equality. Suppose first that $\Phi_{f,h}(p,r_1)=\Phi_{f,h}(p,r_2)$ for some \(0<r_1<r_2<R_k\). Since \(\Phi_{f,h}(p,\cdot)\) is nonincreasing, it is constant on \([r_1,r_2]\). Hence we see that 
$$\partial_r\Phi_{f,h}(p,r)=0$$
for almost every \(r\in(r_1,r_2)\). It follows from \eqref{eq:original-proof-chain} that
$$H(t)=H(r)$$
for \(t\in(0,r)\). Since this holds for almost every $r\in(r_1,r_2)$ and $H$ is continuous, we conclude that $H$ is constant on $(0,r_2)$.

Since equality has to hold in (1) for every $r \in (r_1, r_2)$ and $t \in (0,r)$, we conclude that
$$
q_p(t)=q_p(r).
$$
Since $q_p$ is nonincreasing, it is constant on \((0,r)\). Together with
\[
\lim_{t\to0^+}q_p(t)=\alpha_{n-1},
\]
this gives
\begin{equation*}
A(p,t)
=
\alpha_{n-1}s_k(t)^{n-1}
=
A_k(t)
\qquad\text{for every }t\in(0,r_2).
\end{equation*}
By Lemma \ref{lem:area-comparison}, we see that $B_p(r_2)$ is isometric to the geodesic ball of radius $r_2$ in $\mathbb{M}_k^n$.

Conversely, suppose that \(H\equiv c\) on \((0,r_2)\) and that \(B_p(r_2)\) is isometric to the geodesic ball of radius $r_2$ in $\mathbb{M}_k^n$. Then, for every \(0<r\leq r_2\),
\[
V_f(p,r)
=
\alpha_{n-1}\int_0^r f(t)s_k(t)^{n-1}\dd t
\]
and
\[
V_{k,h}(r)
=
\alpha_{n-1}\int_0^r c f(t)s_k(t)^{n-1}\dd t
=
cV_f(p,r).
\]
Hence
\[
\Phi_{f,h}(p,r)=\frac1c
\qquad (0<r\leq r_2),
\]
and in particular
\[
\Phi_{f,h}(p,r_1)=\Phi_{f,h}(p,r_2),
\]
which completes the proof.

\end{proof}

\section{Applications of the main theorem}
In this section, we derive several consequences of Theorem \ref{thm:main} by taking various radial weights $f$ and $h$, including weighted volume comparisons, annular estimates, and doubling inequalities.

\begin{corollary}
\label{cor:ck}
Let $(M^n,g)$ be an $n(\geq 2)$-dimensional complete Riemannian manifold satisfying $\Ric_M\geq(n-1)kg$ for $k \geq0$. Define \(\rho(x)=d(p,x)\) for some point $p\in M$. Then the function
$$
\frac{V_{c_k}(p,r)}{V_k(r)}
$$
is nonincreasing on $\left(0,\frac{R_k}{2}\right)$.
\end{corollary}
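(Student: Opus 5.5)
This follows from Theorem~\ref{thm:main} with $f=c_k$ and $h\equiv 1$. Since $V_{k,1}(r)=V_k(r)$, we have $\Phi_{c_k,1}(p,r)=V_{c_k}(p,r)/V_k(r)$, which is exactly the quotient in the statement. To apply the theorem, we take the working radius to be $R=R_k/2$ in place of $R_k$. The proof of Theorem~\ref{thm:main} uses only Lemma~\ref{lem:area-comparison} on the relevant interval and the hypotheses on $(0,R)$, so it applies verbatim on $(0,R_k/2)$.

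\emph{Positivity of $f$.} For $k=0$ we have $c_0\equiv 1$. For $k>0$ we have $c_k(t)=\cos(\sqrt{k}\,t)$, which is positive and smooth exactly for $t<\pi/(2\sqrt{k})=R_k/2$. This is why the interval is restricted to $(0,R_k/2)$.

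\emph{Integrability.} On $(0,R_k/2)$ we have $0<c_k\le 1$. By Lemma~\ref{lem:area-comparison}, $A(p,t)\le A_k(t)$, so $c_kA(p,\cdot)$, $c_ks_k^{n-1}$ and $s_k^{n-1}$ are bounded on a bounded interval and hence integrable.

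\emph{Monotonicity of $H$.} Here $H=h/f=1/c_k$. For $k=0$ this is constant. For $k>0$,
\[
H'(t)=\frac{\sqrt{k}\sin(\sqrt{k}\,t)}{\cos^2(\sqrt{k}\,t)}\ \ge 0 \qquad \text{on } (0,R_k/2),
\]
so $H$ is nondecreasing. Theorem~\ref{thm:main} then gives the claimed monotonicity.

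The argument breaks down for $k<0$: there $c_k=\cosh(\sqrt{-k}\,t)$ is increasing, so $1/c_k$ is decreasing and the hypothesis on $H$ fails. This explains the restriction $k\ge 0$.

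The only real obstacle is formal: Theorem~\ref{thm:main} is stated on $(0,R_k)$, while $c_k$ vanishes at $R_k/2$. If needed, one repeats the derivative computation \eqref{eq:original-proof-chain} directly on $(0,R_k/2)$; it goes through unchanged because every ingredient is local in $r$.
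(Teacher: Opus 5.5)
Your proposal is correct and is essentially the paper's proof: take $f=c_k$ and $h\equiv 1$, note that $V_{k,1}=V_k$, and check that $H=1/c_k$ is nondecreasing because $H'=ks_k/c_k^2\ge 0$ for $k\ge 0$; your checks of the positivity of $c_k$ on $(0,R_k/2)$ and of integrability fill in details the paper leaves implicit. One small slip: for $k=0$ the interval $(0,R_k/2)=(0,\infty)$ is unbounded, so ``bounded on a bounded interval'' fails there (indeed $s_0^{n-1}=t^{n-1}$ is not integrable on $(0,\infty)$); instead apply the theorem on $(0,R)$ for each finite $R$, where your boundedness argument works, and then let $R\to\infty$.
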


\begin{proof}
Choose $f(t)=c_k(t)$ and $h(t)\equiv1$. Then $H(t)=\frac{1}{c_k(t)}$. Since $c_k'(t)=-ks_k(t)$, we have
\[
H'(t)
=
\frac{ks_k(t)}{c_k(t)^2}
\geq0
\]
when $k\geq0$. The conclusion follows from Theorem~\ref{thm:main}.
\end{proof}

Note that, when $k=0$, one has $c_0\equiv1$ and $V_0(r)=\omega_n r^n$. Hence Corollary~\ref{cor:ck} recovers the classical Bishop-Gromov theorem, which asserts that the normalized volume ratio $\frac{V(p,r)}{\omega_n r^n}$ is nonincreasing on $(0,+\infty)$. For $k<0$, however, $H(t)=\frac{1}{c_k(t)}$ is strictly decreasing, so Theorem~\ref{thm:main} alone is inconclusive.

\begin{corollary}
\label{cor:decreasing-f}
Let $(M^n,g)$ be an $n(\geq 2)$-dimensional complete Riemannian manifold satisfying $\Ric_M\geq(n-1)kg$ for $k \in \mathbb{R}$. Fix \(p\in M\). Let $f\colon(0,R_k)\rightarrow(0,\infty)$ be a positive smooth nonincreasing function such that the functions $f(t)A(p,t)$ and $f(t)s_k(t)^{n-1}$ are integrable on \((0,r)\) for every \(r\in(0,R_k)\). Then the function
$$
\frac{V_f(p,r)}{V_k(r)}
$$
is nonincreasing on \((0,R_k)\).
\end{corollary}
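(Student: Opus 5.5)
This is a direct specialization of Theorem~\ref{thm:main}: take $h\equiv 1$ and keep $f$ as the given nonincreasing weight. Then
\[
V_{k,h}(r)=\alpha_{n-1}\int_0^r s_k(t)^{n-1}\dd t=V_k(r),
\]
so $\Phi_{f,1}(p,r)=V_f(p,r)/V_k(r)$ is exactly the quotient in the statement. The ratio is $H(t)=h(t)/f(t)=1/f(t)$. Since $f$ is positive and nonincreasing, $1/f$ is nondecreasing. Thus the single structural hypothesis of Theorem~\ref{thm:main} holds automatically, and the monotonicity follows.

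The only point needing care is the integrability hypothesis. Theorem~\ref{thm:main} is stated with integrability on $(0,R_k)$, possibly $(0,R)$ with $R<R_k$. The corollary only assumes integrability on $(0,r)$ for each $r<R_k$. I will therefore fix an arbitrary $R<R_k$ and apply the theorem on $(0,R)$, with $f$ and $h$ restricted there. On this interval we need three facts:
\begin{itemize}
\item $f(t)A(p,t)$ and $f(t)s_k(t)^{n-1}$ are integrable on $(0,R)$ by hypothesis.
\item $s_k(t)^{n-1}$ is continuous on $[0,R]$, hence integrable there.
\item $H=1/f$ is nondecreasing on $(0,R)$.
\end{itemize}
Theorem~\ref{thm:main} then gives monotonicity on $(0,R)$. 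Since $R<R_k$ was arbitrary and monotonicity is a local-in-pairs property, the quotient is nonincreasing on all of $(0,R_k)$: any $r_1<r_2<R_k$ lie in some $(0,R)$.

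If one prefers not to rely on the interval bookkeeping in Theorem~\ref{thm:main}, the proof of that theorem can be rerun verbatim. Write $q_p(t)=A(p,t)/s_k(t)^{n-1}$, which is nonincreasing by Lemma~\ref{lem:area-comparison}. This gives $V_f(p,r)\ge q_p(r)\int_0^r f s_k^{n-1}\dd t$. The derivative of the quotient is then bounded above by a positive multiple of
\[
\int_0^r f(r)f(t)s_k(t)^{n-1}\bigl[f(t)^{-1}-f(r)^{-1}\bigr]\dd t\le 0 .
\]
Strictly speaking, $V_f(p,\cdot)$ is absolutely continuous by the coarea formula, and $V_k$ is smooth and positive, so an a.e.\ nonpositive derivative yields monotonicity.

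I expect no real obstacle beyond this restriction-of-interval point.
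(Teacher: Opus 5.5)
Your proposal is correct and is the paper's proof: set $h\equiv 1$, so that $V_{k,h}=V_k$ and $H=1/f$ is nondecreasing because $f$ is positive and nonincreasing, and then invoke Theorem~\ref{thm:main}. Your extra step of applying the theorem on $(0,R)$ for each $R<R_k$ and then letting $R\to R_k$ is a sensible clean-up of the theorem's inconsistently stated interval hypotheses, which the paper leaves implicit.
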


\begin{proof}
Take $h(t)\equiv1$. Then we see that $H(t)=\frac{1}{f(t)}$. Since $f$ is positive and nonincreasing, the conclusion follows directly from Theorem~\ref{thm:main}.
\end{proof}

The most natural choice is to take $h=f$. In this case, Theorem~\ref{thm:main} immediately yields the following weighted Bishop-Gromov comparison which holds for arbitrary $k$.

\begin{corollary} 
\label{cor:weighted-bg}
Let $(M^n,g)$ be an $n(\geq 2)$-dimensional complete Riemannian manifold satisfying $\Ric_M\geq(n-1)kg$ for $k \in \mathbb{R}$. Fix \(p\in M\). Let $f\colon(0,R_k)\rightarrow(0,\infty)$ be a positive smooth function such that, for every
\(r\in(0,R)\), the functions $f(t)A(p,t)$ and $f(t)s_k(t)^{n-1}$ are integrable on \((0,R_k)\). Then the function
$$
\Phi_f(p,r) := \frac{V_f(p,r)}{V_{k,f}(r)}
$$
is nonincreasing on \((0,R_k)\). If $\partial_r\Phi_f(p,r_0)=0$ for some $r_0 \in (0,R_k)$, then \(B_p(r_0)\) is isometric to the geodesic ball of radius $r_0$ in \(\mathbb M_k^n\).
\end{corollary}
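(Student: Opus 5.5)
Monotonicity follows directly from Theorem~\ref{thm:main} with $h=f$. Then $H\equiv 1$ is trivially nondecreasing, and the integrability hypotheses of Theorem~\ref{thm:main} are exactly those assumed here, since $h s_k^{n-1}=f s_k^{n-1}$. Hence $\Phi_f(p,\cdot)=\Phi_{f,f}(p,\cdot)$ is nonincreasing on $(0,R_k)$.

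For the rigidity statement I will not use the two-radii equality clause of Theorem~\ref{thm:main}, because the hypothesis is pointwise: $\partial_r\Phi_f(p,r_0)=0$ at one radius. I will instead go back to the derivative estimate in its proof. With $h=f$, the bracket
\[
f(r)\int_0^r f s_k^{n-1}\dd t-f(r)\int_0^r f s_k^{n-1}\dd t
\]
vanishes identically. So the only inequality that can be strict is the lower bound \eqref{eq:weighted-lower-bound}. Concretely, at a point of differentiability I will write
\[
\partial_r\Phi_f(p,r)=\frac{\alpha_{n-1}f(r)s_k(r)^{n-1}}{V_{k,f}(r)^2}\int_0^r f(t)s_k(t)^{n-1}\bigl[q_p(r)-q_p(t)\bigr]\dd t .
\]
This follows by substituting $A(p,r)=s_k(r)^{n-1}q_p(r)$ and $V_f(p,r)=\int_0^r f s_k^{n-1}q_p\dd t$ into the quotient rule.

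If this vanishes at $r_0$, then the integrand is nonpositive, because $q_p$ is nonincreasing by Lemma~\ref{lem:area-comparison}. The weight $f s_k^{n-1}$ is strictly positive on $(0,r_0)$. Therefore $q_p(t)=q_p(r_0)$ for almost every $t\in(0,r_0)$, and by monotonicity for every $t\in(0,r_0)$. Letting $t\to0^+$ and using $q_p(t)\to\alpha_{n-1}$ gives $A(p,r_0)=\alpha_{n-1}s_k(r_0)^{n-1}=A_k(r_0)$. The equality case of Lemma~\ref{lem:area-comparison} then gives that $B_p(r_0)$ is isometric to the model ball.

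The main obstacle is regularity: $A(p,\cdot)$, and hence $\Phi_f$, is only differentiable almost everywhere. The hypothesis $\partial_r\Phi_f(p,r_0)=0$ implicitly assumes differentiability at $r_0$, so I will read it at a point where the coarea identity $\partial_rV_f=f(r_0)A(p,r_0)$ holds. If one wants to avoid this, note that $A(p,\cdot)$ is lower semicontinuous (or use the one-sided limit $\lim_{r\to r_0^-}$). Then the vanishing of the derivative still yields $q_p(r_0)\geq\lim_{t\to0}q_p(t)$, hence equality, which is all the argument needs.
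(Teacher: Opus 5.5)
Your proof is correct and follows the paper's route: Theorem~\ref{thm:main} with $h=f$ (so $H\equiv 1$) gives monotonicity, and for the pointwise rigidity you rightly return to the chain \eqref{eq:original-proof-chain}, where the bracket vanishes and only \eqref{eq:weighted-lower-bound} can be strict. This is exactly what the paper's ``immediately yields'' tacitly relies on, since the two-radii equality clause of Theorem~\ref{thm:main} does not apply at a single radius.

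One correction to your regularity aside: $A(p,\cdot)$ need not be lower semicontinuous, as on round $\mathbb{RP}^n$ where $A(p,r)\to\alpha_{n-1}$ as $r\to(\pi/2)^-$, $A(p,\pi/2)=\alpha_{n-1}/2$ and $A(p,r)=0$ for $r>\pi/2$. You do not need it, however: differentiability of $V_f(p,\cdot)$ at $r_0$ forces $q_p(r_0^-)=q_p(r_0^+)$, monotonicity of $q_p$ squeezes $q_p(r_0)$ between them, and so $\partial_rV_f(p,r_0)=f(r_0)A(p,r_0)$ holds at every point of differentiability.
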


\noindent It is worth noting that the choice $f\equiv1$ yields
$$
\Phi_1(p,r)=\frac{V(p,r)}{V_k(r)},
$$
and thus recovers the classical Bishop-Gromov volume comparison theorem. As another consequence of the weighted monotonicity formula, we obtain the following weighted annular comparison theorem.
\begin{corollary} 
\label{cor:weighted-annular}
Let $(M^n,g)$ be an $n(\geq 2)$-dimensional complete Riemannian manifold satisfying $\Ric_M\geq(n-1)kg$ for $k \in \mathbb{R}$. Fix \(p\in M\). Let $f\colon(0,R_0)\rightarrow(0,\infty)$ be a positive smooth function such that, for every
\(s\in(0,R_k)\), the functions $f(t)A(p,t)$ and $f(t)s_k(t)^{n-1}$ are integrable on \((0,R_k)\). Then, for every \(0<r<R<R_k\), we have
$$
\frac{V_f(p,R)-V_f(p,r)}{V_{k,f}(R)-V_{k,f}(r)} \leq \frac{V_f(p,r)} {V_{k,f}(r)}.
$$
\end{corollary}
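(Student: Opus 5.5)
The plan is to derive the annular inequality from the monotonicity in Corollary~\ref{cor:weighted-bg}, which is Theorem~\ref{thm:main} with $h=f$. Write $a=V_f(p,r)$, $b=V_{k,f}(r)$, $A'=V_f(p,R)$ and $B'=V_{k,f}(R)$. Since $f>0$ and $s_k>0$ on $(0,R_k)$, all four quantities are positive. Moreover, $B'-b=\alpha_{n-1}\int_r^R f(t)s_k(t)^{n-1}\dd t>0$, so the left-hand side of the inequality is well defined.

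Monotonicity of $\Phi_f(p,\cdot)$ gives
\[
\frac{A'}{B'}\le\frac{a}{b}, \qquad\text{that is,}\qquad A'b\le aB'.
\]
Subtracting $ab$ from both sides gives
\[
(A'-a)\,b\le a\,(B'-b).
\]
Dividing by the positive quantity $b(B'-b)$ yields exactly
\[
\frac{V_f(p,R)-V_f(p,r)}{V_{k,f}(R)-V_{k,f}(r)}\le\frac{V_f(p,r)}{V_{k,f}(r)}.
\]
This is the elementary "mediant" fact: if $\frac{a+c}{b+d}\le\frac ab$ with $b,d>0$, then $\frac cd\le\frac ab$.

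An alternative, more robust route is to prove the inequality directly without Corollary~\ref{cor:weighted-bg}. Set $q_p(t)=A(p,t)/s_k(t)^{n-1}$, which is nonincreasing by Lemma~\ref{lem:area-comparison}. For $t\in(r,R)$ we have $q_p(t)\le q_p(r)$, so
\[
V_f(p,R)-V_f(p,r)=\int_r^R f s_k^{n-1} q_p\,\dd t\le q_p(r)\int_r^R f s_k^{n-1}\,\dd t.
\]
For $t\in(0,r)$ we have $q_p(t)\ge q_p(r)$, so
\[
V_f(p,r)\ge q_p(r)\int_0^r f s_k^{n-1}\,\dd t.
\]
Dividing the first estimate by $V_{k,f}(R)-V_{k,f}(r)$ and the second by $V_{k,f}(r)$, the factors $\alpha_{n-1}$ cancel. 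This chains to
\[
\frac{V_f(p,R)-V_f(p,r)}{V_{k,f}(R)-V_{k,f}(r)}\le\frac{q_p(r)}{\alpha_{n-1}}\le\frac{V_f(p,r)}{V_{k,f}(r)}.
\]
This version avoids any differentiability or almost-everywhere issues in passing from $\partial_r\Phi\le0$ to monotonicity.

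There is no real obstacle here. The only care needed is positivity of the denominators, which follows from $f>0$ and $s_k>0$ on $(0,R_k)$, together with the integrability hypotheses that make all quantities finite. I would present the second argument as the main proof, since it is self-contained given Lemma~\ref{lem:area-comparison}, and remark that it is equivalent to the monotonicity of $\Phi_f$.
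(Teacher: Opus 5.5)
Your proposal is correct, and the argument you choose as your main proof is the paper's own: bound $q_p$ above by $q_p(r)$ on $(r,R)$ and below by $q_p(r)$ on $(0,r)$, then chain both ratios through $q_p(r)/\alpha_{n-1}$. Your first route, monotonicity of $\Phi_f$ from Corollary~\ref{cor:weighted-bg} combined with the mediant inequality, is also valid and shows that the annular estimate is an algebraic rearrangement of that monotonicity.
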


\begin{proof}
Define
$$
q_p(t):=\frac{A(p,t)}{s_k(t)^{n-1}}
=
\alpha_{n-1}\frac{A(p,t)}{A_k(t)}.
$$
By Lemma~\ref{lem:area-comparison}, we see that $q_p$ is nonincreasing on $(0,R_k)$. For every $t\in(r,R)$, we have
\begin{align*}
V_f(p,R)-V_f(p,r)
&=
\int_r^R f(t)A(p,t)\dd t\\
&=
\int_r^R f(t)s_k(t)^{n-1}q_p(t)\dd t\\
&\leq
q_p(r)\int_r^R f(t)s_k(t)^{n-1}\dd t.
\end{align*}
On the other hand,
\begin{align*}
V_{k,f}(R)-V_{k,f}(r)
&=
\alpha_{n-1}
\int_r^R f(t)s_k(t)^{n-1}\dd t.
\end{align*}
Thus
\begin{align}
\frac{V_f(p,R)-V_f(p,r)}{V_{k,f}(R)-V_{k,f}(r)}\leq \frac{q_p(r)}{\alpha_{n-1}}. \label{ineq: 4}
\end{align}
Since
\begin{align*}
V_f(p,r)
&=
\int_0^r f(t)A(p,t)\dd t\\
&=
\int_0^r f(t)s_k(t)^{n-1}q_p(t)\dd t\\
&\geq
q_p(r)\int_0^r f(t)s_k(t)^{n-1}\dd t\\
&=
\frac{q_p(r)}{\alpha_{n-1}}V_{k,f}(r),
\end{align*}
we have
\begin{align}
\frac{V_f(p,r)}{V_{k,f}(r)} \geq \frac{q_p(r)}{\alpha_{n-1}}. \label{ineq: 5}
\end{align}
Combining (\ref{ineq: 4}) and (\ref{ineq: 5}) gives
$$
\frac{V_f(p,R)-V_f(p,r)}
{V_{k,f}(R)-V_{k,f}(r)}
\leq
\frac{q_p(r)}{\alpha_{n-1}}
\leq
\frac{V_f(p,r)}
{V_{k,f}(r)},
$$
which completes the proof.
\end{proof}

As a further consequence, we obtain the following weighted volume doubling estimate.

\begin{corollary} \label{cor:weighted-doubling}
Let $(M^n,g)$ be an $n(\geq 2)$-dimensional complete Riemannian manifold satisfying $\Ric_M\geq0$. Define \(\rho(x)=d(p,x)\) for some point $p\in M$. For \(\beta\geq0\), define
\[
V_\beta(p,r) := \int_{B_p(r)}\rho(x)^\beta\dd\mu(x).
\]
Then, for every \(\lambda\geq1\) and every \(r>0\),
\[
V_\beta(p,\lambda r)
\leq
\lambda^{n+\beta}V_\beta(p,r).
\]
In particular,
\[
V_\beta(p,2r)
\leq
2^{n+\beta}V_\beta(p,r).
\]
\end{corollary}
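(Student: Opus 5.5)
We apply Corollary~\ref{cor:weighted-bg} with $k=0$ and the weight $f(t)=t^\beta$. Since $\beta\geq 0$, this weight is positive and smooth on $(0,\infty)$. It is also locally integrable against both $A(p,t)$ and $t^{n-1}$, because $A(p,t)\le \alpha_{n-1}t^{n-1}$ by Lemma~\ref{lem:area-comparison}. (Strictly, the corollary is stated on $(0,R_k)$ with integrability on $(0,R_k)$. When $k=0$ we have $R_k=+\infty$, so we only use integrability on each bounded interval $(0,R)$, and the proof of Theorem~\ref{thm:main} goes through verbatim on every such interval.) With this choice, $V_f(p,r)=V_\beta(p,r)$. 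We then compute the model weighted volume:
\[
V_{0,f}(r)=\alpha_{n-1}\int_0^r t^{\beta}t^{n-1}\dd t=\frac{\alpha_{n-1}}{n+\beta}\,r^{n+\beta}.
\]
The corollary therefore states that $r\mapsto V_\beta(p,r)/r^{n+\beta}$ is nonincreasing on $(0,\infty)$.

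Given $\lambda\geq 1$ and $r>0$, we have $\lambda r\geq r$, and monotonicity gives
\[
\frac{V_\beta(p,\lambda r)}{(\lambda r)^{n+\beta}}\leq \frac{V_\beta(p,r)}{r^{n+\beta}}.
\]
Multiplying through by $(\lambda r)^{n+\beta}$ yields $V_\beta(p,\lambda r)\le\lambda^{n+\beta}V_\beta(p,r)$. The case $\lambda=2$ is the stated doubling estimate.

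No step here is a real obstacle; the argument is essentially bookkeeping. The only care needed is to confirm that the hypotheses of Corollary~\ref{cor:weighted-bg} hold on an infinite range. This follows from the area bound of Lemma~\ref{lem:area-comparison}: it makes $t^\beta A(p,t)$ integrable on every bounded interval, and the monotonicity argument only ever uses intervals $(0,r)$ with $r$ finite.

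Alternatively, one could take $f\equiv1$ and $h(t)=t^{-\beta}$ in Theorem~\ref{thm:main}, but then $H=t^{-\beta}$ is decreasing, so that choice fails. The choice $h=f$ made above is the correct one.
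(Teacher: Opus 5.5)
Your proposal is correct and follows the paper's proof essentially verbatim. Both take $k=0$ and $f(t)=t^\beta$ in Corollary~\ref{cor:weighted-bg}, compute $V_{0,f}(r)=\frac{\alpha_{n-1}}{n+\beta}r^{n+\beta}$, and compare the ratios at $r$ and $\lambda r$. Your extra check that $t^\beta A(p,t)$ is integrable on bounded intervals, via $A(p,t)\leq\alpha_{n-1}t^{n-1}$, is a slightly more careful addition to what the paper writes.
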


\begin{proof}
Take $k=0$ and $f(t):=t^\beta$.  Since $s_0(t)=t$, we have
\begin{align*}
V_{0,f}(r)
&=
\alpha_{n-1}
\int_0^r f(t)s_0(t)^{n-1}\dd t\\
&=
\alpha_{n-1}
\int_0^r t^{n+\beta-1}\dd t\\
&=
\frac{\alpha_{n-1}}{n+\beta}r^{n+\beta}.
\end{align*}
Moreover, by the coarea formula,
\begin{align*}
V_f(p,r)
&=
\int_0^r f(t)A(p,t)\dd t\\
&=
\int_0^r t^\beta A(p,t)\dd t\\
&=
\int_{B_p(r)}\rho(x)^\beta\dd\mu(x)\\
&=
V_\beta(p,r).
\end{align*}
By Corollary~\ref{cor:weighted-bg}, the function
$$
\frac{V_f(p,r)}{V_{0,f}(r)}
$$
is nonincreasing. It follows that
$$
\frac{V_f(p,\lambda r)}{V_{0,f}(\lambda r)}
\leq
\frac{V_f(p,r)}{V_{0,f}(r)}.
$$
Therefore,
\begin{align*}
\frac{V_\beta(p,\lambda r)}
{V_\beta(p,r)}
&=
\frac{V_f(p,\lambda r)}
{V_f(p,r)}\\
&\leq
\frac{V_{0,f}(\lambda r)}
{V_{0,f}(r)}\\
&=
\frac{(\lambda r)^{n+\beta}}
{r^{n+\beta}}\\
&=
\lambda^{n+\beta},
\end{align*}
which implies
$$
V_\beta(p,\lambda r)
\leq
\lambda^{n+\beta}V_\beta(p,r),
$$
which completes the proof.
\end{proof}

\section{Weighted isoperimetric inequalities for geodesic balls} \label{sec:weighted-isoperimetric}

In this section, we derive weighted area-volume comparisons and weighted isoperimetric inequalities for geodesic balls from Theorem \ref{thm:main}.

\begin{proposition}
\label{prop:weighted-area-volume}
Let $(M^n,g)$ be an $n(\geq 2)$-dimensional complete Riemannian manifold satisfying $\Ric_M\geq(n-1)kg$ for $k \in \mathbb{R}$. Fix \(p\in M\). Let $f\colon(0,R_k)\rightarrow(0,\infty)$ be a positive smooth function such that, for every
\(r\in(0,R_k)\), the functions $f(t)A(p,t)$ and $f(t)s_k(t)^{n-1}$ are integrable on \((0,R_k)\). Then 
$$ \frac{A_f(p,r)}{V_f(p,r)} \leq \frac{A_{k,f}(r)}{V_{k,f}(r)}.$$
\end{proposition}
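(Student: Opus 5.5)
The plan is to read off the inequality from the area monotonicity of Lemma~\ref{lem:area-comparison}, in exactly the way inequality \eqref{eq:weighted-lower-bound} was obtained in the proof of Theorem~\ref{thm:main}. Equivalently, it is the statement $\partial_r\Phi_f(p,r)\le 0$ from Corollary~\ref{cor:weighted-bg}, written out pointwise. The pointwise form is preferable because it gives the inequality at every $r$, and not just almost everywhere as differentiating the ratio would.

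Set $q_p(t)=A(p,t)/s_k(t)^{n-1}$, which is nonincreasing on $(0,R_k)$ by Lemma~\ref{lem:area-comparison}. By the coarea formula and the positivity of $f$,
\begin{align*}
V_f(p,r)=\int_0^r f(t)s_k(t)^{n-1}q_p(t)\dd t\ \ge\ q_p(r)\int_0^r f(t)s_k(t)^{n-1}\dd t=\frac{q_p(r)}{\alpha_{n-1}}V_{k,f}(r).
\end{align*}
On the other hand, directly from the definitions,
\begin{align*}
A_f(p,r)=f(r)A(p,r)=f(r)s_k(r)^{n-1}q_p(r)=\frac{q_p(r)}{\alpha_{n-1}}A_{k,f}(r).
\end{align*}
Dividing the second identity by the first inequality gives the claim. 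This is legitimate because $V_f(p,r)>0$ and $V_{k,f}(r)>0$ for $r>0$, by positivity of $f$ and since $A(p,t)>0$ for small $t$. If $q_p(r)=0$, the left-hand side vanishes and the inequality is trivial.

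There is no real obstacle here. The only points to check are the positivity of the denominators and that the coarea identity $V_f(p,r)=\int_0^r f(t)A(p,t)\dd t$ holds for every $r$; the latter follows from the integrability hypothesis. If desired, equality can be analyzed as in Theorem~\ref{thm:main}: it forces $q_p$ to be constant on $(0,r)$, which means $B_p(r)$ is isometric to the model ball.
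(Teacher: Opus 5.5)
Your proof is correct, but it takes a different route from the paper's. The paper derives the proposition from the monotonicity of $\Phi_f=V_f/V_{k,f}$ in Corollary~\ref{cor:weighted-bg}. Differentiating gives $A_f(p,r)V_{k,f}(r)-V_f(p,r)A_{k,f}(r)\le 0$ for almost every $r$, and the paper then passes to all $r$ ``by continuity''. You go one step back instead. You pair the pointwise inequality $V_f(p,r)\ge \frac{q_p(r)}{\alpha_{n-1}}V_{k,f}(r)$, which is \eqref{eq:weighted-lower-bound}, with the exact identity $A_f(p,r)=\frac{q_p(r)}{\alpha_{n-1}}A_{k,f}(r)$. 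This is the same computation the paper itself performs later, in (\ref{ineq: 3}) within the proof of Theorem~\ref{thm:weighted-isoperimetric}.

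What your route buys is the point you already flagged: the inequality holds at every $r\in(0,R_k)$ with no a.e.\ differentiation. The paper's a.e.-to-everywhere step is less innocent than ``continuity'' suggests, because $r\mapsto A(p,r)=\mathcal H_g^{n-1}(S_p(r))$ need not be continuous. On the round $\mathbb{RP}^n$, for example, $A(p,r)$ equals $|\Sph^{n-1}|\sin^{n-1}r$ for $r<\pi/2$, equals $|\Sph^{n-1}|/2$ at $r=\pi/2$, and is $0$ beyond. The paper's step can be repaired by letting $t\uparrow r$ through good points and using $q_p(r)\le q_p(t)$, which is exactly the monotonicity your argument uses directly. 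Your version also gives the equality case at once.

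The paper's route, for its part, is shorter once Corollary~\ref{cor:weighted-bg} is available. It also presents the proposition as the infinitesimal form of the weighted Bishop--Gromov monotonicity.
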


\begin{proof}
Corollary~\ref{cor:weighted-bg} shows that the function
$$
\frac{V_f(p,r)}
{V_{k,f}(r)}
$$
is nonincreasing. Differentiating gives
$$
0
\geq
\frac{
A_f(p,r)V_{k,f}(r)
-
V_f(p,r)A_{k,f}(r)
}{
V_{k,f}(r)^2
},
$$
for almost every $r$. We then conclude by continuity.
\end{proof}

Assuming that the ratio of the model weighted area to the model weighted volume is nonincreasing, the weighted area of a geodesic sphere is bounded above by that of the model sphere enclosing the same weighted volume. This yields the following weighted isoperimetric-type comparison for geodesic balls.
\begin{theorem} \label{thm:weighted-isoperimetric}
Let $(M^n,g)$ be an $n(\geq 2)$-dimensional complete Riemannian manifold satisfying $\Ric_M\geq(n-1)kg$ for $k \in \mathbb{R}$. Fix \(p\in M\). Let $f\colon(0,R_k)\rightarrow(0,\infty)$ be a positive smooth function such that, for every
\(r\in(0,R)\), the functions $f(t)A(p,t)$ and $f(t)s_k(t)^{n-1}$ are integrable on \((0,R_k)\). Assume that
\[
\mathcal{Q}_{k,f}(r)
:=
\frac{A_{k,f}(r)}
{V_{k,f}(r)}
\]
is nonincreasing on \((0,R_k)\). For every $0<v<V_{k,f}(R_k)$, define $r_{k,f}(v):=V_{k,f}^{-1}(v)$ and $\mathcal{I}_{k,f}(v):= A_{k,f}\bigl(r_{k,f}(v)\bigr)$. Then we have
\[
A_f(p,r)
\leq
\mathcal{I}_{k,f}
\bigl(V_f(p,r)\bigr).
\]
Moreover, equality holds  for a fixed $r\in(0,R_k)$ if and only if $B_p(r)$ is isometric to the geodesic ball of radius $r$ in $\mathbb{M}_k^n$.
\end{theorem}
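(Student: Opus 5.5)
Set $v:=V_f(p,r)$ and $\bar r:=r_{k,f}(v)$. First, $r_{k,f}$ is well defined: $V_{k,f}$ is continuous and strictly increasing, because its derivative $A_{k,f}=\alpha_{n-1}f s_k^{n-1}$ is positive on $(0,R_k)$.

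The plan is to compare $\bar r$ with $r$. Corollary~\ref{cor:weighted-bg} gives that $V_f(p,\cdot)/V_{k,f}$ is nonincreasing. Near $0$ this ratio tends to $1$, since $A(p,t)/s_k(t)^{n-1}\to\alpha_{n-1}$. Hence $V_f(p,r)\le V_{k,f}(r)$; alternatively, this follows from Lemma~\ref{lem:area-comparison}, since $A(p,t)\le A_k(t)$, integrated against $f$. Because $V_{k,f}$ is increasing, we get $\bar r\le r$, and in particular $v<V_{k,f}(R_k)$.

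Next apply Proposition~\ref{prop:weighted-area-volume}:
\[
A_f(p,r)\le v\,\frac{A_{k,f}(r)}{V_{k,f}(r)}=v\,\mathcal Q_{k,f}(r).
\]
Since $\mathcal Q_{k,f}$ is nonincreasing and $\bar r\le r$, we have $\mathcal Q_{k,f}(r)\le \mathcal Q_{k,f}(\bar r)$. Therefore
\[
A_f(p,r)\le v\,\mathcal Q_{k,f}(\bar r)=V_{k,f}(\bar r)\,\frac{A_{k,f}(\bar r)}{V_{k,f}(\bar r)}=\mathcal I_{k,f}(v).
\]
This proves the inequality.

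For the equality case, suppose $B_p(r)$ is isometric to the model ball. Then $A(p,t)=A_k(t)$ for $t\le r$, so $v=V_{k,f}(r)$, $\bar r=r$, and $A_f(p,r)=A_{k,f}(r)=\mathcal I_{k,f}(v)$.

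Conversely, equality forces equality in the Proposition step. I would not pass through the derivative of $\Phi_f$; instead I would use the pointwise argument from the proof of Theorem~\ref{thm:main}. With $q_p=A(p,\cdot)/s_k^{n-1}$ nonincreasing, we have
\[
V_f(p,r)\ge \frac{q_p(r)}{\alpha_{n-1}}V_{k,f}(r),
\]
which is the same as $A_f(p,r)/V_f(p,r)\le A_{k,f}(r)/V_{k,f}(r)$. Equality here means
\[
\int_0^r f s_k^{n-1}\bigl(q_p(t)-q_p(r)\bigr)\dd t=0.
\]
Since $f>0$ and $q_p$ is monotone, $q_p$ is constant a.e. on $(0,r)$. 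Its limit at $0^+$ is $\alpha_{n-1}$, so $A(p,t)=A_k(t)$ on $(0,r)$. Combined with left-continuity of $q_p$ at $r$ (or equality of the two sides at $r$), this gives $A(p,r)=A_k(r)$. The rigidity part of Lemma~\ref{lem:area-comparison} then shows that $B_p(r)$ is isometric to the model ball.

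The main obstacle is this rigidity step: turning equality at a single $r$ into equality of the integral inequality, while handling the points where $A(p,\cdot)$ is not continuous. I would first try to avoid any regularity issue by using the monotonicity of $q_p$ and the strict positivity of the integrand weight, as above.
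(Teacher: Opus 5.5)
Your proposal is correct and follows essentially the paper's proof. The paper proceeds in the same order: it gets $V_f(p,r)\le V_{k,f}(r)$ from $q_p\le\alpha_{n-1}$ (your alternative route), so that $\sigma\le r$. It derives $A_f(p,r)\le \mathcal Q_{k,f}(r)V_f(p,r)$ inline from the bound $q_p(t)\ge q_p(r)$ instead of citing Proposition~\ref{prop:weighted-area-volume}, and then uses the monotonicity of $\mathcal Q_{k,f}$. For equality it forces $q_p\equiv q_p(r)=\alpha_{n-1}$ and applies the rigidity in Lemma~\ref{lem:area-comparison}, exactly as in your pointwise argument. Your observation that $q_p(t)=q_p(r)$ for a.e.\ $t\in(0,r)$ already gives $q_p(r)=\alpha_{n-1}$, so no continuity of $A(p,\cdot)$ at $r$ is needed; this slightly tightens the paper's wording.
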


\begin{proof}
By Lemma~\ref{lem:area-comparison}, the function $q_p(t):=\frac{A(p,t)}{s_k(t)^{n-1}}$ is nonincreasing on $(0,R_k)$. In particular, since $\lim_{t\to 0^+}q_p(t)=\alpha_{n-1}$, we see that $q_p(t)\leq \alpha_{n-1}$ for every $t\in(0,R_k)$. By the coarea formula,
\begin{align}
V_f(p,r)
&=\int_0^r f(t)A(p,t)\,dt \nonumber \\
&=\int_0^r f(t)s_k(t)^{n-1}q_p(t)\,dt \nonumber\\
&\leq
\alpha_{n-1}\int_0^r f(t)s_k(t)^{n-1}\,dt \nonumber\\
&=V_{k,f}(r). \label{ineq: 2}
\end{align}
Furthermore, since $q_p$ is nonincreasing, one has $q_p(t)\geq q_p(r)$ for every $t\in(0,r)$. Therefore,
\begin{align}
V_f(p,r)
&=\int_0^r f(t)s_k(t)^{n-1}q_p(t)\,dt \nonumber \\
&\geq
q_p(r)\int_0^r f(t)s_k(t)^{n-1}\,dt. \label{ineq: 1}
\end{align}
Since
$$
A_f(p,r)
=
f(r)A(p,r)
=
f(r)s_k(r)^{n-1}q_p(r),
$$
(\ref{ineq: 1}) gives
\begin{align}
A_f(p,r) &\leq f(r)s_k(r)^{n-1}\frac{V_f(p,r)}{\displaystyle\int_0^r f(t)s_k(t)^{n-1}\,dt} \nonumber\\
&=\frac{\alpha_{n-1}f(r)s_k(r)^{n-1}}{\displaystyle\alpha_{n-1}\int_0^r f(t)s_k(t)^{n-1}\,dt} V_f(p,r) \nonumber \\
&=\frac{A_{k,f}(r)}{V_{k,f}(r)}V_f(p,r) \nonumber \\
&=\mathcal{Q}_{k,f}(r)V_f(p,r). \label{ineq: 3}
\end{align}
Let $v:=V_f(p,r)$ and $\sigma:=r_{k,f}(v)=V_{k,f}^{-1}(v)$. (\ref{ineq: 2}) implies that
$$
V_{k,f}(\sigma) = V_f(p,r)\leq V_{k,f}(r).
$$
Since the function $V_{k,f}$ is increasing, we see that $\sigma\leq r$. From (\ref{ineq: 3}) and the assumption that $\mathcal{Q}_{k,f}$ is nonincreasing, it follows
\begin{align*}
A_f(p,r)
&\leq \mathcal{Q}_{k,f}(r)v \\
&\leq \mathcal{Q}_{k,f}(\sigma)v \\
&=A_{k,f}(\sigma) \\
&=A_{k,f}\bigl(r_{k,f}(v)\bigr) \\
&=\mathcal{I}_{k,f}(v),
\end{align*}
which shows that
$$
A_f(p,r)
\leq
\mathcal{I}_{k,f}\bigl(V_f(p,r)\bigr).
$$

We now characterize the case of equality. Suppose first that $A_f(p,r)=\mathcal{I}_{k,f}(v)$. In particular, $A_f(p,r)=\mathcal{Q}_{k,f}(r)V_f(p,r)$. Equality in (\ref{ineq: 1}) shows that  $q_p(t)=q_p(r)$, since $q_p$ is nonincreasing. This implies that $q_p$ is constant on $(0,r)$. Moreover, since $\lim_{t\to0^+}q_p(t)=\alpha_{n-1}$, it follows that $q_p(t)=\alpha_{n-1}$. This shows that
$$
A(p,t)=\alpha_{n-1}s_k(t)^{n-1}=A_k(t).
$$
By Lemma \ref{lem:area-comparison}, $B_p(r)$ is isometric to the geodesic ball of radius $r$ in $\mathbb{M}_k^n$. The converse follows immediately.

\end{proof}

\noindent In particular, if $f(t)=c_k(t)=s_k'(t)$ in Theorem \ref{thm:weighted-isoperimetric}, then 
$$
\mathcal{Q}_{k,c_k}(r) = \frac{A_{k,c_k}(r)}{V_{k,c_k}(r)}=n\frac{c_k(r)}{s_k(r)},
$$
which shows 
$$\mathcal{Q}_{k,c_k}'(r) = -\frac{n}{s_k(r)^2}<0.$$
Combining this observation with Theorem~\ref{thm:weighted-isoperimetric}, we immediately obtain the following result.

\begin{corollary} 
\label{cor:ck-isoperimetric}
Let $(M^n,g)$ be an $n(\geq 2)$-dimensional complete Riemannian manifold satisfying $\Ric_M\geq(n-1)kg$ for $k \in \mathbb{R}$. Fix \(p\in M\). Denote $v:=V_{c_k}(p,r)$ for $r \in (0, \frac{R_k}{2})$. Then we have
\[
A_{c_k}(p,r)
\leq
n\omega_n^{1/n}
v^{\frac{n-1}{n}}
\left[
1-
k\left(
\frac{v}{\omega_n}
\right)^{\frac{2}{n}}
\right]^{1/2}.
\]
Moreover, equality holds if and only if $B_p(r)$ is isometric to the geodesic ball of radius $r$ in $\mathbb{M}_k^n$.
\end{corollary}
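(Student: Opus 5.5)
The plan is to apply Theorem~\ref{thm:weighted-isoperimetric} with the weight $f=c_k=s_k'$, after restricting to an interval on which $c_k$ is positive. Then I will compute the model isoperimetric profile $\mathcal I_{k,c_k}$ explicitly.

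\emph{Step 1: restricting the interval.} For $k>0$ the weight $c_k(t)=\cos(\sqrt{k}\,t)$ is positive only on $(0,R_k/2)$. For $k\le 0$ it is positive on all of $(0,\infty)$. So I will work on $(0,R)$ with $R=R_k/2$ in all cases; when $k\le0$ this is $+\infty$.

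The proof of Theorem~\ref{thm:weighted-isoperimetric} only uses three facts:
\begin{itemize}
\item Lemma~\ref{lem:area-comparison} on $(0,r)$;
\item the coarea formula;
\item the monotonicity of $\mathcal Q_{k,f}$ on $(0,r)$.
\end{itemize}
Hence it holds verbatim with $R_k$ replaced by any $R\le R_k$, as in the version of the main theorem stated in the introduction. Integrability is automatic, since $c_k$ is bounded on bounded subintervals of $(0,R)$ and $A(p,t)\le A_k(t)$.

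\emph{Step 2: model quantities.} Since $\frac{d}{dt}s_k^n=n c_k s_k^{n-1}$ and $\alpha_{n-1}=n\omega_n$, we get
\[
V_{k,c_k}(r)=\omega_n s_k(r)^n,\qquad A_{k,c_k}(r)=n\omega_n c_k(r)s_k(r)^{n-1}.
\]
The remark preceding the corollary already gives $\mathcal Q_{k,c_k}=n c_k/s_k$ and $\mathcal Q_{k,c_k}'=-n/s_k^2<0$. Therefore the monotonicity hypothesis of Theorem~\ref{thm:weighted-isoperimetric} holds on $(0,R)$.

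\emph{Step 3: inverting the model volume.} Let $v=V_{c_k}(p,r)$ with $r<R$. Inequality \eqref{ineq: 2} gives $v\le V_{k,c_k}(r)<V_{k,c_k}(R)$. So $\sigma=r_{k,c_k}(v)\in(0,r]$ is well defined, because $V_{k,c_k}$ is strictly increasing on $(0,R)$, and it satisfies $s_k(\sigma)=(v/\omega_n)^{1/n}$.

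Since $\sigma<R_k/2$, we have $c_k(\sigma)\ge0$. Combined with the identity $c_k^2+k s_k^2=1$, this gives
\[
c_k(\sigma)=\bigl[1-k(v/\omega_n)^{2/n}\bigr]^{1/2}.
\]
The sign condition is exactly where the restriction $r<R_k/2$ enters. Therefore
\[
\mathcal I_{k,c_k}(v)=n\omega_n\Bigl(\tfrac{v}{\omega_n}\Bigr)^{\frac{n-1}{n}}\bigl[1-k(v/\omega_n)^{2/n}\bigr]^{1/2}=n\omega_n^{1/n}v^{\frac{n-1}{n}}\bigl[1-k(v/\omega_n)^{2/n}\bigr]^{1/2}.
\]
Substituting this into the inequality of Theorem~\ref{thm:weighted-isoperimetric} yields the claimed bound. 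The equality characterization transfers directly from the same theorem.

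The only delicate point is Step 1: making sure that positivity of $f$ and monotonicity of $\mathcal Q$ on the shortened interval suffice for the proof of Theorem~\ref{thm:weighted-isoperimetric}. It also requires the correct sign of the square root, which is why the corollary is stated for $r<R_k/2$. The remaining steps are routine computation.
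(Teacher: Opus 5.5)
Your proposal is correct and follows the paper's proof. You apply Theorem~\ref{thm:weighted-isoperimetric} with $f=c_k$, use $V_{k,c_k}=\omega_n s_k^n$ to get $s_k(\sigma)=(v/\omega_n)^{1/n}$, and recover $c_k(\sigma)$ from $c_k^2+ks_k^2=1$ together with $c_k(\sigma)\ge 0$ for $\sigma\le r<R_k/2$. Your Step~1 restricts the theorem to $(0,R_k/2)$, where $c_k$ is genuinely positive and only local integrability is needed; the paper leaves this point implicit, and spelling it out is a worthwhile addition.
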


\begin{proof}
Let $ v:=V_{c_k}(p,r)$ and let $\sigma:=r_{k,c_k}(v)$ such that $V_{k,c_k}(\sigma)=v$. Then 
\begin{align*}
V_{k,c_k}(\sigma)
&=
\alpha_{n-1}
\int_0^\sigma c_k(t)s_k(t)^{n-1}\,dt \\
&=
\frac{\alpha_{n-1}}{n}s_k(\sigma)^n \\
&=
\omega_n s_k(\sigma)^n,
\end{align*}
where we used the fact that $\alpha_{n-1}=n\omega_n$. Hence
$$
s_k(\sigma)
=
\left(\frac{v}{\omega_n}\right)^{1/n}.
$$
Since $c_k(\sigma)^2+k s_k(\sigma)^2=1$, we obtain
$$
c_k(\sigma)^2
=
1-k\left(\frac{v}{\omega_n}\right)^{2/n}.
$$
Since $0\leq r<\frac{R_k}{2}$, we have 
$$
c_k(\sigma)
=
\left\{
1-k\left(\frac{v}{\omega_n}\right)^{2/n}
\right\}^{1/2}.
$$
It follows that
\begin{align*}
\mathcal{I}_{k,c_k}(v)
&=
A_{k,c_k}(\sigma) \\
&=
\alpha_{n-1}c_k(\sigma)s_k(\sigma)^{n-1} \\
&=
n\omega_n
\left\{
1-k\left(\frac{v}{\omega_n}\right)^{2/n}
\right\}^{1/2}
\left(\frac{v}{\omega_n}\right)^{(n-1)/n} \\
&=
n\omega_n^{1/n}v^{(n-1)/n}
\left\{
1-k\left(\frac{v}{\omega_n}\right)^{2/n}
\right\}^{1/2}.
\end{align*}
Applying Theorem~\ref{thm:weighted-isoperimetric} with $f=c_k$, we see that $A_{c_k}(p,r)\leq \mathcal{I}_{k,c_k}(v)$. Therefore we obtain
$$
A_{c_k}(p,r)
\leq
n\omega_n^{1/n}v^{(n-1)/n}
\left\{
1-k\left(\frac{v}{\omega_n}\right)^{2/n}
\right\}^{1/2}.
$$
Moreover, equality holds if and only if $B_p(r)$ is isometric to the geodesic ball of radius $r$ in $\mathbb{M}_k^n$. 
\end{proof}

\noindent We remark that, when $k=0$, since $s_0(r)=r$ and $c_0(r)=1$, Corollary~\ref{cor:ck-isoperimetric} reduces to
\[
A(p,r)
\leq
n\omega_n^{1/n}
V(p,r)^{\frac{n-1}{n}}.
\]
This is the sharp reverse isoperimetric inequality for geodesic balls in a manifold with nonnegative Ricci curvature.

\begin{corollary} \label{cor:power-weight}
Let $(M^n,g)$ be an $n(\geq 2)$-dimensional complete Riemannian manifold satisfying $\Ric_M\geq0$. Define \(\rho(x)=d(p,x)\) for some point $p\in M$. For \(\beta\geq0\), define
\begin{align*}
V_\beta(p,r) &:= \int_{B_p(r)}\rho(x)^\beta\dd\mu(x), \\ 
A_\beta(p,r) &:= \int_{S_p(r)}\rho(x)^\beta\dd A(x). 
\end{align*}
Then
\[
A_\beta(p,r)
\leq
(n+\beta)^{\frac{n+\beta-1}{n+\beta}}
\alpha_{n-1}^{\frac{1}{n+\beta}}
V_\beta(p,r)^{\frac{n+\beta-1}{n+\beta}}.
\]
Moreover, equality holds if and only if $B_p(r)$ is isometric to the Euclidean ball of radius $r$.
\end{corollary}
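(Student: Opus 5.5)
We obtain this as the special case $k=0$, $f(t)=t^\beta$ of Theorem~\ref{thm:weighted-isoperimetric}. As in the proof of Corollary~\ref{cor:weighted-doubling}, the coarea formula gives $V_f(p,r)=\int_0^r t^\beta A(p,t)\dd t=V_\beta(p,r)$. Since $\rho\equiv r$ on $S_p(r)$, we also have $A_f(p,r)=r^\beta A(p,r)=A_\beta(p,r)$. The integrability hypotheses are immediate: $t^\beta A(p,t)\le \alpha_{n-1}t^{n+\beta-1}$ by Lemma~\ref{lem:area-comparison}, and $\beta\ge 0$.

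Next we compute the model quantities. With $s_0(t)=t$,
\[
V_{0,f}(r)=\frac{\alpha_{n-1}}{n+\beta}\,r^{n+\beta},\qquad A_{0,f}(r)=\alpha_{n-1}r^{n+\beta-1},
\]
so that
\[
\mathcal{Q}_{0,f}(r)=\frac{n+\beta}{r},
\]
which is strictly decreasing on $(0,\infty)$. The hypothesis of Theorem~\ref{thm:weighted-isoperimetric} therefore holds, with $R_0=+\infty$.

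It remains to compute the isoperimetric profile $\mathcal{I}_{0,f}$. Given $v>0$, the radius $\sigma=r_{0,f}(v)$ solves $\frac{\alpha_{n-1}}{n+\beta}\sigma^{n+\beta}=v$, that is,
\[
\sigma=\Bigl(\frac{(n+\beta)v}{\alpha_{n-1}}\Bigr)^{\frac{1}{n+\beta}}.
\]
Hence
\[
\mathcal{I}_{0,f}(v)=\alpha_{n-1}\sigma^{n+\beta-1}=\alpha_{n-1}^{1-\frac{n+\beta-1}{n+\beta}}\,(n+\beta)^{\frac{n+\beta-1}{n+\beta}}\,v^{\frac{n+\beta-1}{n+\beta}},
\]
and $\alpha_{n-1}^{1-\frac{n+\beta-1}{n+\beta}}=\alpha_{n-1}^{\frac{1}{n+\beta}}$. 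Substituting $v=V_\beta(p,r)$ into $A_f(p,r)\le\mathcal{I}_{0,f}(V_f(p,r))$ yields exactly the stated inequality.

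The equality characterization transfers directly from Theorem~\ref{thm:weighted-isoperimetric}, since $\mathbb{M}_0^n=\mathbb{R}^n$. The only point to check is that $f(t)=t^\beta$ need not be smooth or positive at $t=0$. This is harmless: the proof of Theorem~\ref{thm:weighted-isoperimetric} uses $f$ only on $(0,R)$, where it is positive and smooth, together with integrability near $0$, which we verified above. No step here is a real obstacle; the only place needing care is the exponent bookkeeping in $\mathcal{I}_{0,f}$.
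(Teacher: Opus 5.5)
Your proposal is correct and follows the paper's proof essentially line by line. Both specialize Theorem~\ref{thm:weighted-isoperimetric} to $k=0$, $f(t)=t^\beta$, verify that $\mathcal{Q}_{0,f}(r)=(n+\beta)/r$ is decreasing, and invert $V_{0,f}$ to obtain $\mathcal{I}_{0,f}$. Your additional checks fill in details the paper leaves implicit: the identifications $A_f=A_\beta$ and $V_f=V_\beta$, integrability of $t^\beta A(p,t)$ near $0$, and that $t^\beta$ is smooth and positive on $(0,\infty)$.
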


\begin{proof}
Note that $f(t)=t^\beta$ and $s_0(t)=t$. Thus
$$
\begin{aligned}
V_{0,\beta}(r)
&=
\alpha_{n-1}
\int_0^r t^\beta s_0(t)^{n-1}\,dt \\
&=
\alpha_{n-1}
\int_0^r t^{n+\beta-1}\,dt \\
&=
\frac{\alpha_{n-1}}{n+\beta}r^{n+\beta}.
\end{aligned}
$$
Moreover,
$$
A_{0,\beta}(r)
=
\alpha_{n-1}r^\beta s_0(r)^{n-1}
=
\alpha_{n-1}r^{n+\beta-1}.
$$
Thus we see that
$$
\mathcal{Q}_{0,\beta}(r)
:=
\frac{A_{0,\beta}(r)}{V_{0,\beta}(r)}
=
\frac{n+\beta}{r},
$$
which is strictly decreasing on $(0,\infty)$. Let $\sigma>0$ be the unique number such that $V_{0,\beta}(\sigma)=v$. Then
$$
\frac{\alpha_{n-1}}{n+\beta}\sigma^{n+\beta}=v,
$$
and hence
$$
\sigma
=
\left(
\frac{(n+\beta)v}{\alpha_{n-1}}
\right)^{\frac{1}{n+\beta}}.
$$
Therefore we obtain
$$
\begin{aligned}
\mathcal{I}_{0,\beta}(v)
&=
A_{0,\beta}(\sigma) \\
&=
\alpha_{n-1}\sigma^{n+\beta-1} \\
&=
\alpha_{n-1}
\left(
\frac{(n+\beta)v}{\alpha_{n-1}}
\right)^{\frac{n+\beta-1}{n+\beta}} \\
&=
(n+\beta)^{\frac{n+\beta-1}{n+\beta}}
\alpha_{n-1}^{\frac{1}{n+\beta}}
v^{\frac{n+\beta-1}{n+\beta}}.
\end{aligned}
$$
Theorem \ref{thm:weighted-isoperimetric} gives
$$
A_\beta(p,r)
\leq
(n+\beta)^{\frac{n+\beta-1}{n+\beta}}
\alpha_{n-1}^{\frac{1}{n+\beta}}
V_\beta(p,r)^{\frac{n+\beta-1}{n+\beta}}.
$$
Moreover, equality in Theorem~\ref{thm:weighted-isoperimetric} holds for a fixed $r>0$ if and only if $B_p(r)$ is isometric to the geodesic ball of radius $r$ in the Euclidean space $\mathbb{R}^n$.
\end{proof}

\vskip 0.3cm
\noindent
{\bf Acknowledgment: } The first author was supported by the National Research Foundation of Korea (NRF) grant RS-2024-00346651. The second author was supported by the National Research Foundation of Korea (NRF-2021R1A2C1003365).

\newpage

\noindent Jiewon Park\\
Department of Mathematical Sciences\\
Korea Advanced Institute of Science and Technology (KAIST)\\
Daejeon, South Korea\\
{\tt e-mail:jiewonpark@kaist.ac.kr}\\
URL: https://sites.google.com/view/jiewonpark/

\bigskip
\noindent Keomkyo Seo\\
Department of Mathematics and Research Institute of Natural Sciences\\
Sookmyung Women's University\\
Cheongpa-ro 47-gil 100, Yongsan-ku, Seoul, 04310, Korea \\
{\tt E-mail:kseo@sookmyung.ac.kr}\\
URL: http://sites.google.com/site/keomkyo/

\end{document}